\newtheorem{theorem}{Theorem}           
\newtheorem{lemma}{Lemma}               
\theoremstyle{definition}
\newtheorem{definition}{Definition}
\begin{document}


\title{A New Method to Solve the Gaussian Integral: A Solution Inspired by Taylor Series}

\author{L\'{a}zaro L. Sales$^{1,}$\footnote{lazarosales@alu.uern.br}}

\author{Jonatas A. Silva$^{2,}$\footnote{arizilanio@gmail.com}}

\author{Eli\^{a}ngela P. Bento$^{3,}$\footnote{eliangela.pb@gmail.com}}

\author{Hidalyn T. C. M. Souza$^{4,}$\footnote{hidalyn.souza@ufersa.edu.br}}

\author{Antonio D. S. Farias$^{4,}$\footnote{antonio.diego@ufersa.edu.br}}

\author{Ot\'{a}vio P. Lavor$^{4,}$\footnote{otavio.lavor@ufersa.edu.br}}

\affiliation{$^1$Departamento de F\'{\i}sica, Universidade do Estado do Rio Grande do Norte, 59610-210, Mossor\'o-RN, Brazil}

\affiliation{$^2$Departamento de Ci\^{e}ncias e Tecnologias, Universidade Federal Rural do Semi-\'{A}rido, 59780-000, Cara\'{u}bas-RN, Brazil}

\affiliation{$^3$Departamento de F\'{i}sica, Universidade Federal da Para\'{i}ba, 58059-970, Jo\~{a}o Pessoa-PB, Brazil}

\affiliation{$^4$Departamento de Ci\^{e}ncias Exatas e Naturais, Universidade Federal Rural do Semi-\'{A}rido, 59900-000, Pau dos Ferros-RN, Brazil}

\date{\today}

\begin{abstract}
	
	In this paper, we have proposed a new method for solving the Gaussian integral. Introducing a parameter that depends on a $n$ index, we have found a general solution for this type of integral inspired by Taylor series of a simple function. We have demonstrated that this parameter represents the Taylor series coefficients of this function, a result very newsworthy. We have also introduced some Theorems that are proved by mathematical induction. The proposed method in this work has shown more practical and accessible than some methods found in the literature. As a test for the method, we have investigated a non-extensive version for the particle number density in Tsallis framework, which enabled us to evaluate the functionality of the method. Besides, solutions for a certain class of the gamma and factorial functions are derived. Moreover, we have presented a simple application in fractional calculus. In conclusion, we believe in the relevance of this work because it presents a new form of solving the Gaussian integral having the differential calculus as a tool.    
	\\
	
\textbf{Keywords:} Gaussian integral, Taylor series, special functions, fractional derivative.

\end{abstract}

\maketitle



\section{Introduction} 

Gaussian integral also known as probability integral is the integral of the function $\exp(-x^{2})$ over the entire line ($-\infty, \infty$). Solutions of this type of integral involve the so-called gamma functions introduced by Euler in 18th century and improved by Legendre, Gauss and Weierstrass \cite{Davis,Gronau}. The Gaussian integral has a wide range of applications in several areas of knowledge. Indeed, when we do a slight change of variables it is possible to compute the normalizing constant of the normal distribution in probability and statistics \cite{Stahl, Murray}. In physics the Gaussian integral appears frequently in quantum mechanics \cite{QGreiner}, to find the probability density of the ground state of the harmonic oscillator, in the path integral formulation \cite {Sakurai}, to find the propagator of the harmonic oscillator and in statistical mechanics \cite{Greiner, Pathria, Salinas}, to find its partition function. 

First we recall the solution of following Gaussian integral, as bellow:
\begin{equation} 
\int_{-\infty}^{+\infty}e^{-\alpha x^2}\;dx = \sqrt{\frac{\pi}{\alpha}}\;,
\end{equation}
where $\alpha \in \mathbb{R},\; \alpha \neq 0$. There are several methods to solve this type of integral, the most well known and widely used in textbooks being the double integral method \cite{Jacob}. Others methods can be found in \cite{Laplace, 2Laplace}. Conrad synthesized eleven ways for solving the Gaussian integral, among them he used the method of Fourier transforms, Stirling's formula, contour integration, and differentiation under the integral sign \cite{Conrad}. Let us now consider a more general Gaussian integral like \cite{Salinas, Hernandez,wolfram}
\begin{eqnarray} \label{fc}
\int_{-\infty}^{+\infty}x^{2n}\;e^{-\alpha x^2}\;dx &=& \dfrac{(2n-1)!!}{2^{n}}\frac{\sqrt\pi}{\alpha^{\frac{2n+1}{2}}}\;, \;\;(\forall\;n\in\mathbb{N})\;,
\end{eqnarray}
\begin{eqnarray} \label{Eq.3} 
\int_{-\infty}^{+\infty}x^{2n}\;e^{-\alpha x^2}\;dx\; &=& \dfrac{\Gamma\left(\frac{2n+1}{2}\right)}{\alpha^{\frac{2n+1}{2}}} , \;\;(\forall\; n\in\mathbb{N})\;.
\end{eqnarray}
For simplicity, we are considering that the number zero is included in the set of natural numbers. Here, $x!!$ is the double factorial and $\Gamma(x)$ is the gamma function. The gamma function has great relevance for the development of new functions that can be applied directly in physics. Normally this function is present in problems of physics such as, for example, in the normalization of Coulomb wave functions and the calculation of probabilities in statistical mechanics \cite{ARFKEN}. Notice that Eqs. (\ref{fc}) and (\ref{Eq.3}) admits a solution for $x^{2n+1}$, however, we present only the solutions for $x^{2n}$ because the method which will be presented in this work requires it.  

Inspired by the ways to solve the Gaussian integral as presented by Conrad, this work aims to present a new method for solving the Gaussian integral based on the numerical evolution of the expansion coefficients in Taylor Series of a simple function. This is relevant because enable us to treat certain mathematical and physical problems from another perspective having the differential calculus as a tool. The method consists of introducing a parameter that has the role of describing the evolution of the numerical sequence of Gaussian integral. Next, we show that this parameter can be thought as the coefficients of the  expansion of function $f(x) = (1-x)^{-1/2}$. Having done this, we were able to introduce some Theorems that are proved by mathematical induction. As a test for the method, we applied the results due to Theorems to obtain a non-extensive version for the particle number density via Tsallis statistics. Also, solutions for the gamma function of the form $\Gamma(1/2\pm n)$, and the factorial function of a kind $(n+1/2)!$, where $n\in\mathbb{N}$, are derived. Besides, we present an application possibility in fractional calculus using the definitions of fractional derivative according to Riemann-Liouville and Caputo. 

The paper is structured as follows. In Section \ref{section1}, we show the entire construction process of the new method for solving the Gaussian integral. Simple applications in Tsallis statistical framework, special functions (gamma and factorial), and fractional calculus are presented in Section \ref{section2}. Finally, the conclusions are shown in Section \ref{conclusion}.

\section{The method} \label{section1}

Consider the Gaussian integral as follows:
\begin{eqnarray} \label{igg}
I_{2n}&=&\int_{-\infty}^{+\infty}x^{2n}\;e^{-\alpha x^2}\;dx\;,\;\;(\forall\;n\in\mathbb{N}; \alpha>0)\;.
\end{eqnarray}
Solving $I_{2n}$ by conventional methods, we arrive at the following results: $I_{0}=\sqrt{\pi}/\alpha^{1/2}$, $I_{2}=\sqrt{\pi}/2\alpha^{3/2}$, $I_{4}=3\sqrt{\pi}/4\alpha^{5/2}$, $I_{6}=15\sqrt{\pi}/8\alpha^{7/2}$ and so on. We can generalize these results by introducing a parameter that depends on $n$. The new method proposed here is based on the following definition:
\begin{eqnarray} 
I_{2n} &\equiv& \gamma_{2n}\frac{\sqrt\pi}{\alpha^{\frac{2n+1}{2}}}\;,
\end{eqnarray}
where $\gamma_{2n}$ is a parameter to be determined. Note that $\gamma_{0}=1$, $\gamma_{2}=1/2$, $\gamma_{4}=3/4$, $\gamma_{6}=15/8$ and so forth. The parameter $\gamma_{2n}$ has an apparently unpredictable sequence, but it becomes evident when we perform the expansion of the function $f(x) = (1-x)^{-1/2}$ in a Taylor series around $x=0$, as bellow:
\begin{eqnarray} 
f(x) &=& \sum_{n=0}^{\infty}{\frac{d^n(1-x)^{-1/2}}{dx^n}\Biggl|}_{x=0}\frac{x^n}{n!}. 
\end{eqnarray}
Analyzing the first coefficients of the above expansion, we arrive at the following results: $f(0)=1$, $f'(0)=1/2$, $f''(0)=3/4$, $f'''(0)=15/8$, $f''''(0)=105/16$ and so on. We now intend to find an expression for the parameter $\gamma_{2n}$ from the results shown above. Note that the coefficients of the expansion generate identical numbers those generated by the evolution of the parameter $\gamma_{2n}$. Thus, it is reasonable to establish the following definition:
\begin{definition} \label{def3}
	
	For $n \in \mathbb{N}$, $\gamma_{2n}$ can be defined as 
	\begin{eqnarray} \label{gama} 
	\gamma_{2n} &\equiv& {{\frac{d^{n}(1-x)^{-1/2}}{dx^{n}}}\Biggl|}_{x=0}\;.
	\end{eqnarray}
	
\end{definition}

\begin{lemma} \label{lem1}
	
	By Definition \ref{def3} for all $n\in\mathbb{N}$, we have
	\begin{eqnarray} 
	\gamma_{2n} &=& \prod_{k=0}^{n-1}\left(\frac{1}{2} + k\right)\;. 
	\end{eqnarray}
	
\end{lemma}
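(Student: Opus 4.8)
The plan is to prove a slightly stronger statement by induction on $n$, keeping track of the $n$-th derivative \emph{as a function of $x$} rather than only at $x=0$. Concretely, I would show that for all $n\in\mathbb{N}$ and all $x<1$,
\begin{eqnarray}
\frac{d^{n}(1-x)^{-1/2}}{dx^{n}} &=& \left[\prod_{k=0}^{n-1}\left(\frac{1}{2}+k\right)\right](1-x)^{-\frac{2n+1}{2}}\;.
\end{eqnarray}
Evaluating this identity at $x=0$ then gives the Lemma at once, because $(1-0)^{-(2n+1)/2}=1$ and the left-hand side is precisely $\gamma_{2n}$ by Definition \ref{def3}.

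For the base case $n=0$, the left-hand side is $(1-x)^{-1/2}$ and the right-hand side is the empty product, which equals $1$ by convention, times $(1-x)^{-1/2}$; so the identity holds, and in particular $\gamma_{0}=1$. For the inductive step, I would assume the displayed identity for some $n\in\mathbb{N}$ and differentiate both sides once more. Applying the chain rule to $(1-x)^{-(2n+1)/2}$ produces a factor $-\tfrac{2n+1}{2}$ from the power rule and a factor $-1$ from $\tfrac{d}{dx}(1-x)$; the two signs cancel, yielding
\begin{eqnarray}
\frac{d^{n+1}(1-x)^{-1/2}}{dx^{n+1}} &=& \left[\prod_{k=0}^{n-1}\left(\frac{1}{2}+k\right)\right]\frac{2n+1}{2}\,(1-x)^{-\frac{2n+3}{2}}\;.
\end{eqnarray}
Since $\tfrac{2n+1}{2}=\tfrac{1}{2}+n$ is exactly the $k=n$ term, it joins the product to give $\prod_{k=0}^{n}\left(\tfrac{1}{2}+k\right)$, and the exponent $-\tfrac{2(n+1)+1}{2}$ is of the required form, completing the induction.

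I do not expect a genuine obstacle here: the only points needing care are the sign bookkeeping in the chain rule — where the negative exponent and the inner derivative $-1$ conspire to keep every factor positive — and the empty-product convention that makes the $n=0$ case come out right. As a remark, one could bypass induction entirely by invoking the generalized binomial theorem, $(1-x)^{-1/2}=\sum_{n\ge 0}\binom{-1/2}{n}(-x)^{n}$, and reading off $\gamma_{2n}=n!\,(-1)^{n}\binom{-1/2}{n}$, then simplifying; but the inductive argument above is the most elementary route and is consistent with the paper's stated policy of establishing its results by mathematical induction.
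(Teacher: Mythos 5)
Your proof is correct and follows essentially the same route as the paper: both arguments strengthen the claim to the pointwise identity $\frac{d^{n}(1-x)^{-1/2}}{dx^{n}} = \prod_{k=0}^{n-1}\left(\frac{1}{2}+k\right)(1-x)^{-\frac{1}{2}-n}$, prove it by induction with the empty-product convention handling $n=0$, and evaluate at $x=0$. Your write-up is in fact slightly more explicit than the paper's about the sign cancellation in the differentiation step.
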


\begin{proof}
	
	The proof of this result is by induction on $n$. More generally, the product notation is defined as
	\begin{eqnarray} 
	 \prod_{i=m}^{n}x_i &=& x_m\cdot x_{m+1}\cdots x_{n-1}\cdot x_n\;.
	\end{eqnarray}
	If $m>n$, the product is an empty product, whose value is $1$. The base case ($n=0$) is an empty product, hence $\gamma_{0}=1$. From Definition \ref{def3}, for now with $x\neq 0$, it is easy to see that the inductive hypothesis is
	\begin{eqnarray} \label{IH}
	\frac{d^{n}(1-x)^{-1/2}}{dx^{n}} &=& \prod_{k=0}^{n-1}\left(\frac{1}{2} + k\right)(1-x)^{-\frac{1}{2}-n}\;.
	\end{eqnarray}
	Then, we shall show that statement (\ref{IH}) it is true for $n+1$. Therefore 
	\begin{eqnarray} \label{these} \nonumber
	\frac{d^{n+1}(1-x)^{-1/2}}{dx^{n+1}} &=& \frac{d}{dx}\left[\prod_{k=0}^{n-1}\left(\frac{1}{2} + k\right)(1-x)^{-\frac{1}{2}-n}\right]\;,
	\\ 
	&=& \prod_{k=0}^{n}\left(\frac{1}{2} + k\right)(1-x)^{-\frac{1}{2}-(n+1)}\;,
	\end{eqnarray}
	where we used the inductive hypothesis to obtain the above result. Thus, at $x=0$, Eq. (\ref{these}) may take the form
	\begin{eqnarray}   
	\gamma_{2n} &=& \prod_{k=0}^{n-1}\left(\frac{1}{2} + k\right)\;,\;\;\forall\;\;n\in\mathbb{N}\;. 
	\end{eqnarray}
	 
\end{proof}

\begin{definition} \label{prop1}
	
	From Eq. (\ref{fc}) we can write
	\begin{eqnarray}
	\bar{\gamma}_{2n} &\equiv& \frac{(2n-1)!!}{2^{n}}\;,\;\;\forall\;\;n\in\mathbb{N}\;.
	\end{eqnarray}
	
\end{definition}

\noindent
The double factorial $m!!$ is defined as follow \cite{ARFKEN}:
\begin{eqnarray} 
m!! = \left\{
\begin{array}{ll}
m\cdot(m-2)\dots 5\cdot 3\cdot 1 & \;\; m>0\;\; {\rm odd} \\
m\cdot(m-2)\dots 6\cdot 4\cdot 2 & \;\; m>0\;\; {\rm even} \\
1 & \;\; m=-1,\;0\;.
\end{array}
\right.
\end{eqnarray}

\begin{theorem} \label{the1}
	
	Let $I_{2n}$ given by Eq. (\ref{fc}), then 
	\begin{eqnarray} 
	I_{2n} &=& \gamma_{2n}\;\frac{\sqrt\pi}{\alpha^{\frac{2n+1}{2}}}\;,\;\;\forall\;\;n\in\mathbb{N}\;,
	\end{eqnarray}
	where $\gamma_{2n}$ is given by Definition \ref{def3}. 
\end{theorem}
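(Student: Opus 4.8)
The plan is to show that the two families of coefficients $\gamma_{2n}$ (from Definition \ref{def3}) and $\bar{\gamma}_{2n}$ (from Definition \ref{prop1}) coincide, since Eq. (\ref{fc}) already gives $I_{2n} = \bar{\gamma}_{2n}\,\sqrt\pi/\alpha^{(2n+1)/2}$. Thus the whole theorem reduces to the identity
\begin{eqnarray} \label{plan-key}
\gamma_{2n} &=& \bar{\gamma}_{2n}\;,\qquad\forall\;n\in\mathbb{N}\;.
\end{eqnarray}
By Lemma \ref{lem1} we may replace the left-hand side by the explicit product $\prod_{k=0}^{n-1}\left(\tfrac12 + k\right)$, so it suffices to prove $\prod_{k=0}^{n-1}\left(\tfrac12 + k\right) = (2n-1)!!/2^{n}$.

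First I would carry out this last identity by induction on $n$. The base case $n=0$ is immediate: the left side is an empty product equal to $1$, and the right side is $(-1)!!/2^{0} = 1$ by the stated convention for the double factorial. For the inductive step, assume $\prod_{k=0}^{n-1}\left(\tfrac12 + k\right) = (2n-1)!!/2^{n}$; then pulling out the last factor gives
\begin{eqnarray} \nonumber
\prod_{k=0}^{n}\left(\frac12 + k\right) &=& \left(\frac12 + n\right)\prod_{k=0}^{n-1}\left(\frac12 + k\right)
= \frac{2n+1}{2}\cdot\frac{(2n-1)!!}{2^{n}} \\
&=& \frac{(2n+1)!!}{2^{n+1}}\;,
\end{eqnarray}
using $(2n+1)(2n-1)!! = (2n+1)!!$, which closes the induction. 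Combining this with Lemma \ref{lem1} yields $\gamma_{2n} = (2n-1)!!/2^{n} = \bar{\gamma}_{2n}$, and hence $I_{2n} = \gamma_{2n}\,\sqrt\pi/\alpha^{(2n+1)/2}$ as claimed.

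An alternative, arguably cleaner route avoids even invoking the closed form: starting from Eq. (\ref{fc}) and Definition \ref{prop1}, write $I_{2n} = \bar{\gamma}_{2n}\sqrt\pi/\alpha^{(2n+1)/2}$, and then show directly that $\bar{\gamma}_{2n}$ satisfies the same defining relation as the Taylor coefficient $\gamma_{2n}$ — namely that $\bar{\gamma}_{0}=1$ and $\bar{\gamma}_{2(n+1)} = \left(\tfrac12+n\right)\bar{\gamma}_{2n}$ — which is exactly the recursion produced by differentiating $(1-x)^{-1/2}$ once more and evaluating at $x=0$, as already computed inside the proof of Lemma \ref{lem1}. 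Either way the logical skeleton is: Eq. (\ref{fc}) supplies the value of the integral, Lemma \ref{lem1} supplies the product form of $\gamma_{2n}$, and a one-line induction matches them.

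I do not anticipate a genuine obstacle here; the only point requiring mild care is the boundary/base case, where one must use the convention $(-1)!! = 1$ (and $0!!=1$) consistently with the empty-product convention $\gamma_{0}=1$ already fixed in Lemma \ref{lem1}, so that the equality $\gamma_{0} = \bar{\gamma}_{0}$ is not merely asserted but follows from the stated definitions. Once that is observed, the inductive step is the routine manipulation $(2n+1)(2n-1)!! = (2n+1)!!$ displayed above, and the theorem follows.
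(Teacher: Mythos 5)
Your proposal is correct and follows essentially the same route as the paper: both reduce the theorem to the identity $\gamma_{2n}=\bar{\gamma}_{2n}$, invoke Lemma \ref{lem1} for the product form, handle the base case via the convention $(-1)!!=1$, and close the induction with the step $\left(\tfrac{1}{2}+n\right)\frac{(2n-1)!!}{2^{n}}=\frac{(2n+1)!!}{2^{n+1}}$. The only cosmetic difference is that you induct on the explicit product identity while the paper inducts on the equality $\gamma_{2q}=\bar{\gamma}_{2q}$ directly; the content is identical.
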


To prove the Theorem \ref{the1}, we will take into account the Definition \ref{prop1}. In other words, we just need to show that $\gamma_{2n}=\bar{\gamma}_{2n}$, for all $n\in\mathbb{N}$. 

\begin{proof}
	
	The base case ($n=0$) shows that $\gamma_{0}=\bar{\gamma}_{0}$, since from Definition \ref{prop1}, $(-1)!!=1$. Suppose that $\gamma_{2n}=\bar{\gamma}_{2n}$ it is true for $n=q$ with $q\in\mathbb{N}$, then the inductive hypothesis is given by
	\begin{eqnarray} 
	\gamma_{2q} &=& \bar{\gamma}_{2q}\;.
	\end{eqnarray}
	We now must show that $\gamma_{2n}=\bar{\gamma}_{2n}$ it is also true for $n=q+1$. So we have
	\begin{eqnarray} \label{these2}
	\gamma_{2(q+1)} &=& \bar{\gamma}_{2(q+1)}\;.
	\end{eqnarray}
	By Lemma \ref{lem1}, with $n=q+1$ and $k=i$, the left-hand side of relation (\ref{these2}) becomes 
	\begin{eqnarray} \nonumber
	\gamma_{2(q+1)} &=& \prod_{i=0}^{q}\left(\frac{1}{2} + i\right)\;, 
	\\  \nonumber
	&=& \frac{1}{2}\left( \frac{1}{2} +1\right)\cdots \left( \frac{1}{2} +(q-1)\right) \left( \frac{1}{2} + q\right)\;,
	\\ 
	&=& \gamma_{2q} \left( \frac{1}{2} + q\right)\;.           	    	
	\end{eqnarray}
	Applying the inductive hypothesis, we get
	\begin{eqnarray} \label{prove1}
	\gamma_{2(q+1)} &=& \bar{\gamma}_{2q} \left( \frac{1}{2} + q\right)\;.           	    	
	\end{eqnarray}
	Using the Definition \ref{prop1} in Eq. (\ref{prove1}), we obtain
	\begin{eqnarray} \nonumber
	\gamma_{2(q+1)} &=& \frac{(2q-1)!!}{2^{q}}\left( \frac{1}{2} + q\right)\;,
	\\ \nonumber
	&=& \frac{1}{2^{q+1}}\left[ 1\cdot 3\cdot 5 \cdots (2q-1)(2q+1)\right]\;,  
	\\ 
	&=& \bar{\gamma}_{2(q+1)}\;.          	    	
	\end{eqnarray}
	This completes the proof. 
	
\end{proof}

An immediate consequence of the Theorem \ref{the1} is that we can establish a relationship between the double factorial and the parameter $\gamma_{2n}$, as bellow:
\begin{eqnarray}
(2n-1)!! = 2^n\gamma_{2n} = 2^n\;{{\frac{d^{n}(1-x)^{-1/2}}{dx^{n}}}\Biggl|}_{x=0}\;,\;\;\forall\;\;n\in\mathbb{N}\;.
\end{eqnarray}
For example, when $n=1$, we have $1!!=1$, since $\gamma_{2}=1/2$. For $n=2$, we have $3!!=3$, since $\gamma_{4}=3/4$ and so on.

The method developed here is relevant because it presents a new form of solving the Gaussian integral of the type $I_{2n}$. Since there are other methods, this has shown more practical than some methods found in the literature. For example, in Eq. (\ref{Eq.3}), we need to solve a gamma function $\Gamma(t)$ as shows the Definition \ref{def1}, whereas our method just needs to derive a simple function. Besides, we demonstrated that parameter $\gamma_{2n}$ represents the Taylor series coefficients of the function $f(x)=(1-x)^{-1/2}$. This means that the evolution of Gaussian integral of a kind $I_{2n}$ has a strong relationship to the expansion terms of $f(x)$, a result very newsworthy. One aspect that makes this method accessible is the fact that, for example, in physical problems $n$ is not usually very large, which facilitates its application. Hereafter, we briefly present an application of our method in physics, specifically in the determination of the particle number density in Tsallis framework. Moreover, we present solutions for the gamma and factorial functions (special functions) in terms of parameter $\gamma_{2n}$, and we also show a simple application in fractional calculus.   

\section{Simple applications} \label{section2} 

\subsection{Particle number density in Tsallis framework}

We aim to present a simple application of the method proposed in this work. We chose to determine the particle number density in the Tsallis framework because an ideal scenario emerges in which it is possible to notice the functionality of the method. In 1988, Constantino Tsallis proposed a possible generalization of the Boltzmann-Gibbs (BG) entropy \cite{Tsallis}. The proposed new entropy is expressed by
\begin{eqnarray} 
S_{q} = \frac{k_{B}}{q-1}\left(1-\sum_{i=1}^{\Omega}p_{i}^{q}\right)\;,
\end{eqnarray}
where $k_B$ is the Boltzmann constant, $p_{i}$ is the probability of the system to be found in the microstate $i$ and $q$ is the parameter that characterizes the degree of nonextensivity of the system. The classical entropy is recovered in the limit $q \rightarrow 1$. In Tsallis' statistics, the particle number of species $i$ per volume can be written as follows:
\begin{eqnarray} \label{dnpeg} 
n_{i}^{q} = \frac{g_{i}}{(2\pi\hbar)^{3}}\int_{-\infty}^{+\infty} d^{3}p \mathcal{N}_{i}^{q}\;,
\end{eqnarray}
where $\mathcal{N}_{i}^{q}$ is the generalized occupation number, $g_i$ is the degeneracy of species $i$ and $\hbar$ is the Planck reduced constant. The generalized occupation number for fermions in Tsallis framework is given by \cite{Shen}
\begin{eqnarray} \label{19}
\mathcal{N}_{i}^{q} = \frac{1}{e_{2-q}^{\beta (E_{i}-\mu_{i})} + 1}\;,
\end{eqnarray}
where $e_{2-q}^{x} \equiv [1 + (q-1)x]^{1/(q-1)}$. Further, $\beta=1/k_{B}T$, $\mu_{i}$ and $E_i$ are the chemical potential and particle energy of species $i$, respectively. 

Expanding Eq. (\ref{19}) up to the first order of $(q-1)$, we obtain \cite{Pessah}
\begin{eqnarray} \label{64}
	\mathcal{N}_{i}^{q} = \frac{1}{e^{\beta (E_{i}-\mu_{i})} + 1} + \frac{(q-1)}{2}\frac{(\beta (E_{i}-\mu_{i}))^{2}e^{\beta (E_{i}-\mu_{i})}}{\left( e^{\beta (E_{i}-\mu_{i})} + 1\right)^2}\;.
\end{eqnarray}
Considering the case in which $k_{B}T<(E_{i}-\mu_{i})$, then $e^{\beta (E_{i}-\mu_{i})} \gg  1$. Thus, Eq. (\ref{64}) becomes
\begin{eqnarray} \label{Napprox}
	\mathcal{N}_{i}^{q} = e^{-\beta (E_{i}-\mu_{i})} + \frac{(q-1)}{2}(\beta (E_{i}-\mu_{i}))^{2}e^{-\beta (E_{i}-\mu_{i})}\;.
\end{eqnarray}
In this way, assuming the energy for non-relativistic particles as $E_i=m_{i}c^2+p^2/2m_i$ and using Eq. (\ref{Napprox}) in Eq. (\ref{dnpeg}), the generalized particle number density take the form
\begin{eqnarray} \label{33} \nonumber
	n_{i}^{q} &=& \frac{2\pi g_{i}}{(2\pi\hbar)^{3}}e^{\beta(\mu_{i}-m_{i}c^2)}\int_{-\infty}^{+\infty} dpp^{2} e^{-\alpha p^2}  + \frac{2\pi g_{i}}{(2\pi\hbar)^{3}}\frac{(q-1)}{2}\beta^{2}e^{\beta(\mu_{i}-m_{i}c^2)}(m_{i}c^{2}-\mu_{i})^{2}\int_{-\infty}^{+\infty} dpp^{2} e^{-\alpha p^2} + \frac{2\pi g_{i}}{(2\pi\hbar)^{3}}\frac{(q-1)}{2}\beta^{2} \\
	&\times&
	m_{i} (m_{i}c^{2}-\mu_{i})e^{\beta(\mu_{i}-m_{i}c^2)}\int_{-\infty}^{+\infty} dpp^{4} e^{-\alpha p^2} 
	+
	\frac{g_{i}}{(2\pi\hbar)^{3}}\frac{(q-1)}{2}\beta^{2}e^{\beta(\mu_{i}-m_{i}c^2)}\frac{\pi}{2m_{i}^{2}}\int_{-\infty}^{+\infty} dpp^{6} e^{-\alpha p^2}\;, 
\end{eqnarray} 
where $\alpha \equiv \beta/2m_{i}$ and $c$ is the speed of light in vacuum. 

Note that in the above expression we have four Gaussian integrals of type $I_{2n}$. It is at this point that we will apply our method. To solve these Gaussian integrals, we will use the Theorem \ref{the1}. The results are:
\begin{eqnarray} 
	\int_{-\infty}^{+\infty} dpp^{2} e^{-\alpha p^2} &= \gamma_{2}(2m_{i}k_{B}T)^{3/2}\sqrt{\pi}\;,\\ 
	\int_{-\infty}^{+\infty} dpp^{4} e^{-\alpha p^2} &= \gamma_{4}(2m_{i}k_{B}T)^{5/2}\sqrt{\pi}\;,\\ 
	\int_{-\infty}^{+\infty} dpp^{6} e^{-\alpha p^2} &= \gamma_{6}(2m_{i}k_{B}T)^{7/2}\sqrt{\pi}\;.
\end{eqnarray}
We will use the Definition \ref{def3} to compute the coefficients $\gamma_{2n}$, thus we obtain $\gamma_{2}=1/2$, $\gamma_{4}=3/4$ and $\gamma_{6}=15/8$. Finally, we can find the generalized particle number density. After a little algebra, we get \cite{Pessah}
\begin{eqnarray} 
	n_{i}^{q} &= g_{i}\left(\frac{m_{i}k_{B}T}{2\pi\hbar^{2}}\right)^{3/2} e^{\frac{\mu_{i}-m_{i}c^{2}}{k_{B}T}}\left\lbrace  1 + \frac{(q-1)}{2}\left[ \left(\dfrac{m_{i}c^{2}-\mu_{i}}{k_{B}T}\right)^{2}  + 3\left(\dfrac{m_{i}c^{2}-\mu_{i}}{k_{B}T}\right) + \dfrac{15}{4} \right] \right\rbrace\;.
\end{eqnarray}
Notice that the usual particle number density is recovered when $q=1$.

We chose to apply our method to determine the particle number density in Tsallis framework because it arises integrals in which the $n$ index assumes three different values that facilitated the application.  This enables us to visualize from another perspective how the Gaussian integral evolve as $n$ grows. As mentioned before, this method is viable to apply in mathematical and physical problems where the $n$ index is small, since we would need to perform derivatives successive to obtain the result. On the other hand, the derivative order is half of the value of $2n$ index, for example, in Eq. (\ref{33}) the integral containing the term $p^6$, we need to derive only three times, making the practical and efficient method in the resolution of integrals of type $I_{2n}$. Even that $n$ is large, it was demonstrated that the method also works for all $n\in \mathbb{N}$. 

\subsection{Special functions}

An immediate application of Theorem \ref{the1} is verified in the gamma and factorial functions. Below is the definition of the special functions we will consider from now on. 

\begin{definition} \label{def1}
	
	(Euler, 1730) Let $t \in \mathbb{R}$, and $t>0$, the gamma function is defined by \cite{BOAS,Riley}
	\begin{eqnarray} \label{FG}
	\Gamma(t) &=& \int_{0}^{+\infty}x^{t-1}\;e^{- x}\;dx\;.
	\end{eqnarray}
		
\end{definition}

\noindent
Using $t=p+1$, and integrating by parts, we obtain the following recurrence relation: 
\begin{eqnarray} \label{2.12}
\Gamma(p+1) &=& p\Gamma(p)\;.
\end{eqnarray}

\begin{definition} \label{def2}
	
	For $m>-1$, the factorial function is defined by \cite{BOAS,Riley}
	\begin{eqnarray} \label{3.21}
	m! &=& \int_{0}^{+\infty}x^{m}\;e^{-x}\;dx\;.
	\end{eqnarray}
	
\end{definition}

We begin with the gamma function given by Definition \ref{def1}, setting $t=n+1/2$, that is
\begin{eqnarray} 
\Gamma\left(\frac{2n+1}{2}\right)  &=& \int_{0}^{+\infty}x^{\frac{2n-1}{2}}\;e^{-x}\;dx\;. 
\end{eqnarray}
Taking $x=r^{2}$, 
\begin{eqnarray} 
\Gamma\left(\frac{2n+1}{2}\right) &=& \int_{-\infty}^{+\infty}r^{2n}\;e^{-r^2}\;dr\;. 
\end{eqnarray}
Note that we can apply Theorem \ref{the1}, with $\alpha=1$, in the above expression to obtain 
\begin{eqnarray} \label{gammafunction}
\Gamma\left(\frac{2n+1}{2}\right) &=& \gamma_{2n}\sqrt{\pi}\;. 
\end{eqnarray}
Having presented (\ref{gammafunction}), it is convenient to introduce a recurrence relation for the parameter $\gamma_{2n}$.   

\begin{lemma} \label{lem2}
	
	If $\gamma_{2n}$ is given by Definition \ref{def3}, then
	\begin{eqnarray} 
	\gamma_{2(n+1)} &=& \frac{2n+1}{2}\gamma_{2n}\;.
	\end{eqnarray}
	
\end{lemma}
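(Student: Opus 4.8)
The plan is to prove the recurrence $\gamma_{2(n+1)} = \frac{2n+1}{2}\gamma_{2n}$ directly from the product formula for $\gamma_{2n}$ established in Lemma \ref{lem1}, rather than going back to the differential-operator definition. Since Lemma \ref{lem1} gives $\gamma_{2n} = \prod_{k=0}^{n-1}\left(\frac{1}{2}+k\right)$ for all $n\in\mathbb{N}$, I would write out $\gamma_{2(n+1)}$ as the same product with upper index shifted by one.

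First I would expand $\gamma_{2(n+1)} = \prod_{k=0}^{n}\left(\frac{1}{2}+k\right)$ and split off the last factor, the one corresponding to $k=n$, namely $\left(\frac{1}{2}+n\right)$. This leaves $\gamma_{2(n+1)} = \left[\prod_{k=0}^{n-1}\left(\frac{1}{2}+k\right)\right]\left(\frac{1}{2}+n\right)$. Then I would recognize the bracketed product as exactly $\gamma_{2n}$, again by Lemma \ref{lem1}, and rewrite $\frac{1}{2}+n = \frac{2n+1}{2}$, which yields the claim. A short sentence should note that the identity holds for $n=0$ as well, where the bracket is the empty product equal to $1$ and indeed $\gamma_2 = \frac{1}{2} = \frac{1}{2}\gamma_0$, so no separate base case is really needed — the manipulation is valid for every $n\in\mathbb{N}$.

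Alternatively, one could run a two-line induction: assume the formula of Lemma \ref{lem1} (already proved) and just observe that appending one more term to a finite product multiplies it by the new term. Either way the argument is essentially a relabeling of the product's endpoint, so there is no real obstacle here; the only thing to be slightly careful about is the empty-product convention when $n=0$, which the paper has already spelled out in the proof of Lemma \ref{lem1}. I would present the direct computation, since it is cleaner than invoking induction twice.

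\begin{proof}
By Lemma \ref{lem1}, $\gamma_{2n} = \prod_{k=0}^{n-1}\left(\frac{1}{2}+k\right)$ for every $n\in\mathbb{N}$. Applying this with $n+1$ in place of $n$ and separating the factor with $k=n$,
\begin{eqnarray} \nonumber
\gamma_{2(n+1)} &=& \prod_{k=0}^{n}\left(\frac{1}{2}+k\right)
= \left[\prod_{k=0}^{n-1}\left(\frac{1}{2}+k\right)\right]\left(\frac{1}{2}+n\right) \\
&=& \gamma_{2n}\left(\frac{1}{2}+n\right) = \frac{2n+1}{2}\,\gamma_{2n}\;.
\end{eqnarray}
For $n=0$ the bracketed product is empty and equals $1$, so the computation reads $\gamma_2 = \frac{1}{2}\gamma_0$, which is consistent. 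Hence the recurrence holds for all $n\in\mathbb{N}$.
\end{proof}
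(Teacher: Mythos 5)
Your proof is correct, but it takes a different route from the paper's. You derive the recurrence purely algebraically from the product formula of Lemma \ref{lem1}, splitting off the $k=n$ factor; the paper instead proves Lemma \ref{lem2} by combining the gamma-function functional equation $\Gamma(p+1)=p\Gamma(p)$ with the identity $\Gamma\left(\frac{2n+1}{2}\right)=\gamma_{2n}\sqrt{\pi}$, setting $p=n+\tfrac{1}{2}$. Your argument is the more elementary and self-contained of the two: it depends only on Lemma \ref{lem1} and the empty-product convention, whereas the paper's proof depends on Eq. (\ref{gammafunction}), which in turn was obtained via Theorem \ref{the1}, so the paper's version of the lemma carries a longer chain of dependencies. (In fact, the factor-splitting step you use already appears verbatim inside the paper's proof of Theorem \ref{the1}, so you are essentially extracting that step and promoting it to a standalone proof.) What the paper's route buys is a conceptual link between the recurrence for $\gamma_{2n}$ and the functional equation of the gamma function, which is thematically closer to the applications in that subsection; what your route buys is logical economy. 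Both are valid, and your handling of the $n=0$ empty-product case is careful and correct.
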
 

\begin{proof}
	
	Using (\ref{gammafunction}), and the succeeding term, given by
	\begin{eqnarray} 
	\Gamma\left(\frac{2n+1}{2}+1\right) &=& \gamma_{2(n+1)}\sqrt{\pi}\;,
	\end{eqnarray} 
	we can insert these expressions in Eq. (\ref{2.12}), defining $p=n+1/2$. Therefore
	\begin{eqnarray} 
	\gamma_{2(n+1)} &=& \frac{2n+1}{2}\gamma_{2n}\;. 
	\end{eqnarray}
	
\end{proof}      

The Theorem \ref{the1} ensure us that the parameter $\gamma_{2n}$ is given by Definition \ref{def3}. Based on this, it is straightforward to show that the following equality is valid:
\begin{eqnarray} 
\sum_{n=0}^{\infty}\gamma_{2n}\frac{x^n}{n!} &=& \sum_{n=0}^{\infty}(-1)^{n}x^{n} \binom{-\frac{1}{2}}{n}\;, 
\end{eqnarray}
where $\binom{x}{y}$ is the binomial coefficient. From the above expression, we can conclude that 
\begin{eqnarray}
	\gamma_{2n} &=& (-1)^{n}\frac{\sqrt{\pi}}{\left(-\frac{1}{2}-n\right)!}\;,
\end{eqnarray}
which yields the following result:
\begin{eqnarray}
\Gamma\left(\frac{1}{2}-n\right)  &=& (-1)^{n}\frac{\sqrt{\pi}}{\gamma_{2n}}\;,
\end{eqnarray}
where we use the identity $m!=\Gamma(m+1)$.

We now let us apply Theorem \ref{the1} in the factorial function given by Definition \ref{def2}, putting $m=n+1/2$. Then we have 
\begin{eqnarray} 
\left(\frac{2n+1}{2}\right)! &=& \int_{0}^{+\infty}x^{\frac{2n+1}{2}}\;e^{-x}\;dx\;. 
\end{eqnarray}
Replacing again $x=r^{2}$, we find
\begin{eqnarray} 
\left(\frac{2n+1}{2}\right)! &=& \int_{-\infty}^{+\infty}r^{2(n+1)}\;e^{-r^2}\;dr\;. 
\end{eqnarray}
Hence using Theorem \ref{the1}, we obtain
\begin{eqnarray} 
\left(\frac{2n+1}{2}\right)! &=& \gamma_{2(n+1)}\sqrt{\pi}\;, 
\end{eqnarray} 
and making use of Lemma \ref{lem2}, we get
\begin{eqnarray} \label{46}
\left(\frac{2n+1}{2}\right)! &=& \frac{2n+1}{2}\gamma_{2n}\sqrt{\pi}\;. 
\end{eqnarray}

\subsection{Fractional derivative: a possibility}

Here we intend to present an interesting result that may be useful in fractional calculus. Initially let us consider the following definitions: 

\begin{definition} \label{hyperf}
	
	Let $Re(c)>Re(b)>0$, the hypergeometric function is defined by \cite{Weisstein}
	\begin{eqnarray}
	{_{2}F_{1}}\left(a,b;c;z\right) &=& \frac{\Gamma(c)}{\Gamma(b)\Gamma(c-b)}\int_{0}^{1}\tau^{b-1}(1-\tau)^{c-b-1}(1-z\tau)^{-a}d\tau\;,
	\end{eqnarray}
	where $Re(x)$ is real part of $x$.
\end{definition}

\begin{definition} \label{defRL}
	
	The fractional derivative according to Riemann-Liouville is defined by \cite{Oliveira}
	\begin{eqnarray}
		D^{\alpha}f(t) &=& \frac{1}{\Gamma(m-\alpha)}\frac{d^{m}}{dt^m}\int_{0}^{t}\frac{f(\tau)}{(t-\tau)^{\alpha-m+1}}d\tau\;,
	\end{eqnarray}
	where $\alpha$ is a complex number such that $Re(\alpha)>0$ and yet shall is in interval $m-1<Re(\alpha)\leq m$ with $m \in \mathbb{N}$.  
	
\end{definition}

\begin{definition} \label{defCaputo}
	
	The fractional derivative according to Caputo is given by \cite{Oliveira}
	\begin{eqnarray}
	D^{\alpha}f(t) &=& \frac{1}{\Gamma(m-\alpha)}\int_{0}^{t}(t-\tau)^{m-\alpha-1}\frac{d^{m}}{d\tau^m}f(\tau)d\tau\;,
	\end{eqnarray}
	being $Re(\alpha)>0$ such that $m-1<Re(\alpha)\leq m$ with $m \in \mathbb{N}$.
	
\end{definition}

By using $f(t)=t^\theta$ with $\theta>-1$ in Definition \ref{defRL}, we obtain
\begin{eqnarray}
	D^{\alpha}t^\theta &=& \frac{\Gamma(\theta+1)}{\Gamma(\theta-\alpha+1)}t^{\theta-\alpha}\;.
\end{eqnarray}
For the case where $\alpha=\theta$, we have
\begin{eqnarray}
\frac{d^{\alpha}t^\alpha}{dt^\alpha} &=& \alpha!\;.
\end{eqnarray}
Let us now show that the above result can be represented by the $k$-th derivative of a simple function applied to a point.

\begin{lemma} \label{lem4}
	
	Let $k \in \mathbb{N}$, then
	\begin{eqnarray} 
	{{\frac{d^{k}(1-x)^{-1}}{dx^{k}}}\Biggl|}_{x=0} &=& k!\;.
	\end{eqnarray}
\end{lemma}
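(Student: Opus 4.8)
The plan is to prove the identity by induction on $k$, exactly in the style of the proof of Lemma \ref{lem1}. First I would establish the base case $k=0$: the zeroth derivative of $(1-x)^{-1}$ is $(1-x)^{-1}$ itself, which evaluated at $x=0$ gives $1 = 0!$. For the inductive step I would need a slightly stronger statement that survives the induction, namely the closed form for the derivative before evaluation,
\begin{eqnarray} \label{IHlem4}
\frac{d^{k}(1-x)^{-1}}{dx^{k}} &=& k!\,(1-x)^{-1-k}\;,
\end{eqnarray}
for $x \neq 1$. Assuming (\ref{IHlem4}) holds for some $k \in \mathbb{N}$, I would differentiate once more, using the chain rule on $(1-x)^{-1-k}$, which brings down a factor $-(1+k)$ together with the factor $-1$ from the derivative of $1-x$, giving $(k+1)\cdot k!\,(1-x)^{-2-k} = (k+1)!\,(1-x)^{-1-(k+1)}$. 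This confirms (\ref{IHlem4}) for $k+1$, and evaluating at $x=0$ yields the claimed $k!$.

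The only mild subtlety — and really the main thing to get right — is the bookkeeping of the two minus signs in the chain rule, since a sign error there would be fatal; but this is routine. A secondary point worth a sentence is that one differentiates the auxiliary identity (\ref{IHlem4}) valid on a punctured neighborhood of $x=0$, and only afterward specializes to $x=0$, so that differentiation at the point is justified; this mirrors the remark ``for now with $x\neq 0$'' in the proof of Lemma \ref{lem1}. Beyond that there is no real obstacle: the statement is an elementary fact about derivatives of the geometric-series generating function $(1-x)^{-1} = \sum_{k\geq 0} x^k$, and indeed an alternative one-line proof would be to differentiate this power series $k$ times term by term and read off the constant term as $k!$. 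I would present the inductive argument as the primary proof to match the paper's chosen style, and possibly mention the power-series viewpoint as a remark.
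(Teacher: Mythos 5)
Your proof is correct, but it takes a different route from the paper. The paper's proof is the power-series one you relegate to a closing remark: it differentiates the geometric series term by term, writes
\begin{equation}
\frac{d^{k}(1-x)^{-1}}{dx^{k}} \;=\; \Gamma(k+1)\sum_{n=k}^{\infty}\binom{n}{k}x^{n-k} \;=\; \frac{k!}{(1-x)^{k+1}}\;,
\end{equation}
using the identity $\sum_{n\geq k}\binom{n}{k}x^{n-k}=(1-x)^{-(k+1)}$, and then sets $x=0$. Your primary argument instead proves the closed form $d^{k}(1-x)^{-1}/dx^{k}=k!\,(1-x)^{-1-k}$ by induction on $k$, which is what the paper itself does for the analogous Lemma~\ref{lem1} (with exponent $-1/2$), so your version is arguably more consistent with the paper's own style. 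It is also more self-contained: the paper's one-line computation leans on the binomial-series identity, which itself needs a justification (typically by exactly the induction you perform, or by term-by-term differentiation inside the radius of convergence), whereas your induction uses nothing beyond the chain rule. The only nitpick is your aside about a ``punctured neighborhood of $x=0$'': the closed form holds on a full neighborhood of $0$ (indeed for all $x\neq 1$), so no puncturing at $0$ is needed; the paper's ``for now with $x\neq 0$'' remark in Lemma~\ref{lem1} is itself misplaced in the same way. This does not affect the validity of your argument.
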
 

\begin{proof}
	
	In fact,
	\begin{eqnarray} \nonumber
		\frac{d^{k}(1-x)^{-1}}{dx^{k}} &=& \Gamma(k+1)\sum_{n=k}^{\infty}\binom{n}{k}x^{n-k}\;, \\
		&=& \frac{k!}{(1-x)^{k+1}}\;, 
	\end{eqnarray}
	where 
	\begin{eqnarray}
		\sum_{n=k}^{\infty}\binom{n}{k}x^{n-k} &=& \frac{1}{(1-x)^{k+1}}\;.
	\end{eqnarray}
	Hence, at $x=0$, we obtain
	\begin{eqnarray} 
	{{\frac{d^{k}(1-x)^{-1}}{dx^{k}}}\Biggl|}_{x=0} &=& k!\;.
	\end{eqnarray}
	
\end{proof}

Suppose that $k$ can take the form $k=n + 1/2$, the Lemma \ref{lem4} may be written as follow:
\begin{eqnarray} \label{55}
{{\frac{d^{\frac{2n+1}{2}}(1-x)^{-1}}{dx^{\frac{2n+1}{2}}}}\Biggl|}_{x=0} &=& \left(\frac{2n+1}{2}\right)!\;.
\end{eqnarray}
Let us now analyze the left-hand side of the above equation. To accomplish this, we will use the Definitions \ref{defRL} and \ref{defCaputo} to evaluate the fractional derivatives. Thus, using the Definition \ref{defRL} with $\alpha=n+1/2$, we find the following result: 
\begin{eqnarray}
	D^{\frac{2n+1}{2}}(1-x)^{-1} &=& \frac{x^{-n-\frac{1}{2}}}{\Gamma\left(\frac{1}{2}-n\right)}\; {_{2}F_{1}}\left(1,1;m-n+\frac{1}{2};x\right)\;, 
\end{eqnarray}
where ${_{2}F_{1}}\left(a,b;c;z\right)$ is the hypergeometric function given by the Definition \ref{hyperf}. This result applied at the point $x=0$ diverge.

Now using the Definition \ref{defCaputo} with $\alpha=n+1/2$ on the left-hand side of Eq. (\ref{55}), we obtain
\begin{eqnarray}
D^{\frac{2n+1}{2}}(1-x)^{-1} &=& \frac{\Gamma(m+1)}{\Gamma\left(m-n+\frac{1}{2}\right)}x^{m-n-\frac{1}{2}}\; {_{2}F_{1}}\left(m+1,1;m-n+\frac{1}{2};x\right)\;. 
\end{eqnarray}
Note that at the point $x=0$, the above result goes to zero, since $m>n+1/2$. We show that both Riemann-Liouville and Caputo definitions are flimsy when computed the fractional derivatives of the function $f(x)=(1-x)^{-1}$ at the point $x=0$. On the other hand, this problem can be avoided considering the result presented in Eq. (\ref{46}). Hence, we may write 
\begin{eqnarray} \label{Eq.60}
{{D^{\frac{2n+1}{2}}(1-x)^{-1}}\bigl|}_{x=0} &=& \frac{2n+1}{2}\gamma_{2n}\sqrt{\pi}\;.
\end{eqnarray}

\section{Conclusions} \label{conclusion}

It was presented a new method for solving the Gaussian integral inspired by expansion in Taylor series of a simple function, namely $f(x)=(1-x)^{-1/2}$. Introducing a parameter with a $n$ index dependence, we have found a general solution for this type of integral being able to use in any situation, since the Gaussian integral is of type $I_{2n}$. We have demonstrated that the parameter $\gamma_{2n}$ represents the Taylor series coefficients of the function $f(x)$, a result newsworthy. The reliability of the method is guaranteed through the proof of some Theorems, which were proved by mathematical induction. To check the functionality of the method, we have obtained the particle number density in the Tsallis framework. Besides, we have presented solutions for the gamma function of the form $\Gamma(1/2\pm n)$, and the factorial function of a kind $(n+1/2)!$, in terms of the parameter $\gamma_{2n}$. We have also shown that the method is useful in fractional calculus, for example, the definitions of fractional derivative according to Riemann-Liouville and Caputo are flimsy when evaluated the fractional derivatives of the function $f(x)=(1-x)^{-1}$ at the point $x=0$, whereas using our method we find the result as shows Eq. (\ref{Eq.60}), showing again the functionality and efficiency of the method. In conclusion, we believe in the relevance of the present work because it reveals a new way of solving the Gaussian integral: one of the most famous integrals of the exact sciences. 

\section*{Acknowledgements}

The authors are very grateful to R. C. Duarte, N. S. Almeida, T. Dumelow and A. M. Filho by the helpful discussions and the Brazilian agency CAPES for financial support.

\end{document}